\newtheorem*{introtheorem}{Theorem}
\newtheorem{theorem}{Theorem}[section]
\newtheorem{corollary}[theorem]{Corollary}
\newtheorem{cor}[theorem]{Corollary}
\newtheorem{lemma}[theorem]{Lemma}
\newtheorem{example}[theorem]{Example}
\newtheorem{ex}[theorem]{Example}
\theoremstyle{definition}
\newtheorem{remark}[theorem]{Remark}
\newcommand{\defi}[1]{\textsf{#1}} % for defined terms
\newcommand{\bbC}{\mathbb{C}}
\newcommand{\bbN}{\mathbb{N}}
\newcommand{\bbR}{\mathbb{R}}
\newcommand{\bbZ}{\mathbb{Z}}
\newcommand{\prm}{^\prime}
\newcommand{\parent}[1]{\left( #1 \right)}
\newcommand{\abs}[1]{\left|#1\right|}
\newcommand{\floor}[1]{\left\lfloor #1 \right\rfloor}
\newcommand{\xn}[1]{X_{#1}}
\newcommand{\xmn}[2]{\parent{X^{#1}}_{#2}}
\newcommand{\amn}[2]{\parent{A_{#1}}_{#2}}
\newcommand{\axn}[1]{A(X)_{#1}}
\newcommand{\apxn}[1]{A\prm(X)_{#1}}
\newcommand{\derrorn}[1]{\delta_{#1}}
\newcommand{\eerrorn}[1]{\epsilon_{#1}}
\begin{document}
\title{Fast-growing series are transcendental}
\author{Robert J. MacG. Dawson}\email{rdawson@cs.smu.ca}\address{Dept. of Mathematics and Computing Science\\Saint Mary's University, Halifax, NS, Canada\\B3H 3C3}
\author{Grant Molnar}\email{Grant.S.Molnar.GR@dartmouth.edu}\address{Mathematics Department\\Dartmouth College, Hanover, NH, USA\\245 Kemeny Hall}

\begin{abstract}
Let $R$ be a subring of $\bbC[[z]]$, and let $X \in \bbC[[z]]$. The Newton-Puiseux Theorem implies that if the coefficients of $X$ grow sufficiently rapidly relative to the coefficients of the series in $R$, then $X$ is transcendental over $R$. We prove an alternative proof of this result by establishing a relationship between the coefficients of $A(X)$ and $A^\prime(X)$, where $A(T)$ is a polynomial over $\bbC[[z]]$. 
\end{abstract}

\maketitle

\section{Introduction}

For $C(z) = C$ a power series in $\bbC[[z]]$, we write $\parent{C}_n$ for the $n$th coefficient of $C$, or $C_n$ if no confusion arises. Throughout this paper, indices of series and sequences are always nonnegative unless otherwise noted.

Suppose we are given a ring $R \subseteq \bbC[[z]]$ and a power series $X \in \bbC[[z]]$. It is natural to ask what relationship $R$ bears to $X$. For instance, we may ask whether $X$ is algebraic or transcendental over $R$. (Recall that $X$ is \defi{algebraic} over $R$ if there is a nonzero polynomial $F(T) \in R[T]$ such that $F(X) = 0$, and $X$ is \defi{transcendental} over $R$ otherwise.)

Transcendental elements are useful building blocks in the theory of commutative rings. Indeed, $X$ is transcendental over $R$ precisely if the ring $R[X]$ satisfies the following universal property: for every $R$-algebra $S$, and every element $x \in S$, there is a unique $R$-algebra morphism $f : R[X] \to S$ such that $f(X) = x$. But if $X$ is not transcendental over $R$, our options for $f(X)$ are sharply curtailed. For instance, if $X \not\in R$ but $X^2 \in R$, then an $R$-algebra morphism $f : R[X] \to S$ must map $X$ to a square root of $X^2$ in $S$; any other series algebraic over $R$ is subject to similar restrictions.

However, transcendental elements are slippery. Indeed, many decades elapsed between Euler's formulation of the concept for real numbers and Liouville’s 1844 construction of a transcendental number \cite{Liou}; Hermite did not show $e$ to be transcendental until 1873 \cite{Herm}. (For an historical overview of the history of transcendental numbers, see the first chapter of Baker \cite{Bak} or Section 22.2.3 of Suzuki \cite{Suz}.)

The following example follows Liouville's construction of a decimal expansion that must represent a transcendental number.

\begin{example}\label{Example: Irrationality in a Liouville style}
	Let $R = \bbC[z]$. Let $L(z) = \sum_{n \geq 0} z^{2^n}$; we note that the set of nonzero coefficients of $L^{p-1}$ is a proper subset of the set of nonzero coefficients of $L^p$. For $p<q$, define $c(p,q) \coloneqq 2^q(1-2^{-p})$; then we have $(L^p)_{c(p,q)} = q!$, $(L^j)_{c(p,q)} = 0$ for all $j<p$, and $(L^j)_n = 0$ for all $j \leq p$ and all $n$ with $0<|c(p,q)-n|<2^{q-p}$.

Suppose now that $A(T) \coloneqq \sum_{j \leq m} A_j T^j \in \bbC[z][T]$ is a degree $m$ polynomial. We select $n$ with $(A_m)_n \neq 0$, and let $d \coloneqq \max\{\deg(A_j):j \leq m\}$, where we adopt the convention $\deg (0) \coloneqq -\infty$. Then for any $q$ large enough that $2^{q-p} > d$, we have
\[
(A(L))_{c(p,q) + n} = (L^m)_{c(p,q)}\cdot(A_m)_n \neq 0.
\]
Thus $L$ is transcendental over $\bbC[z]$.
\end{example}

It is possible that the technique embodied in Example \ref{Example: Irrationality in a Liouville style} can be extended to construct a series $X$ transcendental over (for instance) the ring of absolutely convergent series. But the rate of growth of the coefficients of $X$, rather than of the gaps between them, may also be incompatible with the existence of a nonzero polynomial that takes $X$ to 0.

Recall that a series of the form $C(z) \coloneqq \sum_{n \gg -\infty} C_{n/d} z^{n/d}$ is a \defi{Puiseux series}, and 
\[
\bbC((z^*)) \coloneqq \bigcup_{d \geq 1} \bbC((z^{1/d}))
\]
is the \defi{field of Puiseux series}. If for some $r > 0$, $\abs{C_{n/d}} = O(r^{n/d})$ as $n \to \infty$, we say $C$ exhibits \defi{exponential growth}. Otherwise, $C$ exhibits \defi{superexponential growth}.

\begin{remark}
	A \emph{power series} $C(z) \in \bbC[[z]]$ exhibits exponential growth precisely if $C(z)$ converges for $z$ in a neighborhood of the origin. In other words, $C(z)$ exhibits exponential growth precisely if $C(z)$ defines an analytic function in a neighborhood of the origin.
\end{remark}

\begin{theorem}[Newton-Puiseux Theorem, \cite{Nowak}]\label{Theorem: Newton-Puiseux Theorem}
	The field $\bbC((z^*))$ is algebraically closed. Moreover, if a nonzero polynomial $A(T) \in \bbC((z^*))[T]$ has coefficients exhibiting exponential growth, then the roots of $A(T)$ exhibit exponential growth.
\end{theorem}

The following corollary is a straightforward application of the Newton-Puiseux Theorem (see \cite{Leason}).

\begin{cor}\label{Corollary: Superexponential series are transcendental}
	If $X \in \bbC[[z]]$ exhibits superexponential growth, then $X$ is transcendental over the ring of series exhibiting exponential growth.
\end{cor}

\begin{example} 
	The power series $\sum_{n \geq 0} n! z^n \in \bbC[[z]]$ is irrational over the ring of series exhibiting exponential growth. It is a fortiori irrational over the ring of Abel-summable series, the ring generated by the convergent series, and the ring of absolutely convergent series, since these are all subrings of the ring of series exhibiting exponential growth.
\end{example}

\begin{remark}
	The proof of the Newton-Puiseux Theorem applies without modification if we replace $\bbC$ with any characteristic 0 algebraically closed field equipped with an absolute value $\abs{\cdot}$. It may also be adapted to prove that fast-growing series are transcendental over other subrings of $\bbC[[z]]$ whose series exhibit modest growth.
\end{remark}

The object of this note is to establish by purely elementary means that if every series in $R$ has coefficients exhibiting at most modest growth, then any series $X \in \bbC[[z]]$ with sufficiently fast-growing coefficients is transcendental over $R$. We attain this result by means of an elementary algebraic identity (Theorem \ref{Theorem: Decomposition of the coefficients of A(X)}), rather than by invoking the machinery of Puiseux series. We state our main theorem (Theorem \ref{Theorem: Transcendence for fast-growing series}) here.

\begin{introtheorem}
	Fix $R$ a subring of $\bbC[[z]]$, and suppose we have a monotone increasing function $\rho : \bbN \to \bbR_{> 0}$ such that for every $C \in R$, we have $C_n = O(\rho(n))$ as $n \to \infty$.
	
	Let $X \in \bbC[[z]]$; suppose that $\abs{\xn 0} \geq 1$, and that for every fixed $\lambda$ and $m$, the power series $X$ satisfies the following conditions as $n \rightarrow \infty$: 
	\begin{align*}
		\rho(n) \parent{\sum_{\ell \leq \frac{n}{2}} \abs{\xn {\ell}}}^m &= o(\abs{\xn {n-\lambda}}); \\
		\rho(n) \abs{\xn {n - \lambda - 1}} \parent{\sum_{\ell < \frac{n}{2}} \abs{\xn \ell}}^m &= o(\abs{\xn {n-\lambda}}).
	\end{align*}
	Then $X$ is transcendental over $R$.
\end{introtheorem}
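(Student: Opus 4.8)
The plan is to argue by contradiction. Assume $X$ is algebraic over $R$, and among all nonzero polynomials in $R[t]$ annihilating $X$ choose one, $A(t) = \sum_{j=0}^{m} A_j t^j$ with $A_m \neq 0$, of least degree $m$. Then $m \geq 1$, for otherwise $A_0 = A(X) = 0$; and since $K$ has characteristic $0$, the formal derivative $A'(t) = \sum_{j=1}^{m} j A_j t^{j-1}$ is a \emph{nonzero} polynomial of degree $m-1 < m$, so minimality of $m$ forces $A'(X) \neq 0$. Write $\lambda$ for the least index with $(A'(X))_\lambda \neq 0$. Before anything else I would extract two facts about $X$ from the hypotheses: taking $\lambda = m = 0$ in the second displayed condition gives $\rho(n)|X_{n-1}| = o(|X_n|)$, and as $\rho$ is increasing and strictly positive this yields $|X_n| \geq 2|X_{n-1}|$ for all large $n$; combined with the fact that $X$ cannot be eventually zero — otherwise the strictly positive quantity $\rho(n)\sum_{\ell \leq n/2}|X_\ell|$ would have to vanish for large $n$, contradicting the first condition — this gives $|X_n| \to \infty$ and, what I will actually use, the geometric estimate $|X_b| \leq 2^{-(c-b)}|X_c|$ for all large $b \leq c$.

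The engine of the proof is an exact decomposition of $(A(X))_n$ for large \emph{odd} $n$ that exhibits the promised link between the coefficients of $A(X)$ and of $A'(X)$. Fix such an $n$ and split $X = P + Q$, where $P = \sum_{\ell < n/2} X_\ell z^\ell$ is the truncation of $X$ below degree $n/2$ and $Q = X - P$ is supported in degrees $\geq \lceil n/2 \rceil$. The finite Taylor expansion $A(P+Q) = A(P) + A'(P)Q + (\text{a sum of terms each divisible by } Q^2)$ holds over any commutative ring, and every term divisible by $Q^2$ has order $\geq 2\lceil n/2 \rceil = n+1$ because $n$ is odd; hence, reading off the coefficient of $z^n$, $(A(X))_n = (A(P))_n + (A'(P)Q)_n$. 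Since $P \equiv X \pmod{z^{\lceil n/2\rceil}}$ we have $A'(P) \equiv A'(X) \pmod{z^{\lceil n/2\rceil}}$, and since $Q_b = X_b$ for $b \geq \lceil n/2 \rceil$ and $Q_b = 0$ otherwise, $(A'(P)Q)_n = \sum_{a=0}^{\lfloor n/2\rfloor}(A'(X))_a X_{n-a}$. For $n$ large enough that $\lambda \leq \lfloor n/2 \rfloor$, the terms with $a < \lambda$ vanish, so $A(X) = 0$ becomes
\[
0 = (A'(X))_\lambda X_{n-\lambda} + (A(P))_n + \sum_{a=\lambda+1}^{\lfloor n/2\rfloor}(A'(X))_a X_{n-a}.
\]

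It remains to prove that the last two terms are $o(|X_{n-\lambda}|)$; then dividing by $X_{n-\lambda} \neq 0$ gives $|(A'(X))_\lambda| = o(1)$, absurd for the fixed nonzero element $(A'(X))_\lambda \in K$. Bounding $(A(P))_n$ is routine: the triangle inequality together with $|(A_j)_b| = O(\rho(b)) = O(\rho(n))$ for $b \leq n$ and the elementary inequality $\sum_c |(P^j)_c| \leq \big(\sum_{\ell<n/2}|X_\ell|\big)^j$ gives $|(A(P))_n| = O\big(\rho(n)(\sum_{\ell<n/2}|X_\ell|)^m\big)$, which is $o(|X_{n-\lambda}|)$ by the first displayed hypothesis (for odd $n$, $\sum_{\ell<n/2}$ and $\sum_{\ell\leq n/2}$ agree). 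The same estimate shows $|(A'(X))_a| = O\big(\rho(n)(\sum_{\ell<n/2}|X_\ell|)^{m-1}\big)$ uniformly in $a \leq \lfloor n/2\rfloor$. The one genuinely delicate point — the step I expect to be the main obstacle — is the tail $\sum_{a=\lambda+1}^{\lfloor n/2\rfloor}(A'(X))_a X_{n-a}$: each term is individually $o(|X_{n-\lambda}|)$, but there are $\Theta(n)$ of them, so a term-by-term bound would cost a spurious factor of $n$. This is precisely what the geometric estimate $|X_b| \leq 2^{-(c-b)}|X_c|$ from the first step is for: it gives $\sum_{a=\lambda+1}^{\lfloor n/2\rfloor}|X_{n-a}| = \sum_{b=\lceil n/2\rceil}^{n-\lambda-1}|X_b| \leq 2|X_{n-\lambda-1}|$, so the tail is $O\big(\rho(n)|X_{n-\lambda-1}|(\sum_{\ell<n/2}|X_\ell|)^m\big)$, which is $o(|X_{n-\lambda}|)$ by the second displayed hypothesis. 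This contradiction establishes that $X$ is transcendental over $R$.
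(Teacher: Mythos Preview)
Your proof is correct and takes a genuinely different route from the paper's. The paper builds a combinatorial decomposition of $(X^m)_n$ (Lemma~2.1 and Theorem~2.2): each summand $X_{k_1}\cdots X_{k_m}$ is sorted according to whether some $k_i$ exceeds $n/2$, yielding three error terms $\delta_n$, $\epsilon_n$, $\gamma_{n,\lambda}$ which are bounded separately; the paper then shows that $A(X)=0$ forces $A'(X)=0$ and iterates (characteristic~$0$) to conclude $A=0$. You instead split $X = P+Q$ at level $n/2$ and invoke the finite Taylor expansion $A(P+Q) = A(P) + A'(P)Q + Q^2\cdot(\ldots)$; restricting to odd $n$ kills the $Q^2$ contribution at degree $n$ outright, so you have only two error terms to estimate rather than three, and the appearance of $A'(X)$ is immediate rather than emerging from the combinatorics. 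You also replace the paper's iterate-the-derivative step with the standard minimal-degree argument, and you handle the tail $\sum_{a>\lambda}(A'(X))_a X_{n-a}$ via the geometric decay $|X_b|\le 2^{-(c-b)}|X_c|$ (extracted from the second hypothesis with $\lambda=m=0$), whereas the paper uses only eventual monotonicity of $|X_n|$ and absorbs the resulting factor of $n$ into an extra power of $\sum_{\ell<n/2}|X_\ell|$. Your argument is shorter and more conceptual; the paper's approach has the compensating virtue that its decomposition (Theorem~2.2) is a standalone identity valid for all $n$, which may be of independent use.
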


In particular, Theorem \ref{Theorem: Transcendence for fast-growing series} applies to the ring of power series exhibiting exponential growth, for instance when we take $\rho(n) \coloneqq n!$ (see Example \ref{Example: transcendental over series exhibiting exponential growth} below). Although the content of Theorem \ref{Theorem: Transcendence for fast-growing series} is implied by Corollary \ref{Corollary: Superexponential series are transcendental}, at least when $R$ is the ring of series exhibiting exponential growth, we find merit in our approach and hope that Theorem \ref{Theorem: Decomposition of the coefficients of A(X)} may find other applications in the future.

\begin{remark}
	Our work applies verbatim to power series over any characteristic 0 field $K$ equipped with an absolutely value $\abs{\cdot}$.
\end{remark}

The idea behind the proof of Theorem \ref{Theorem: Transcendence for fast-growing series} is fairly straightforward: we suppose we have a polynomial $A(T) \coloneqq \sum_{j \leq m} A_j T^j \in R[T]$ such that $A(X) = 0$, and deduce that $A(T) = 0$. To do so, we make a careful examination of $\axn n$ for $n$ large; if the coefficients of $X$ grow sufficiently rapidly, then the behavior of $\xn n$ will dominate $\axn n$ unless the coefficient of $\xn n$ in $\axn n$ is zero. But as $A(X) = 0$, the coefficient of $\xn n$ must be zero, and so the behavior of $\xn {n-1}$ will dominate $\axn n$ unless this coefficient is zero; proceeding inductively, for any $\ell$ small, we conclude the coefficient of $\xn {n-\ell}$ is zero. It turns out that for $\ell$ small relative to $n$, the coefficient of $\xn {n-\ell}$ in $\axn n$ is independent of $n$: in fact, the coefficient of $\xn {n - \ell}$ is precisely $\apxn \ell$, where $A\prm(X) \coloneqq \sum_{j\leq m}j A_j X^{j-1}$ is the formal derivative of $A(T)$ evaluated at $X$ (see Theorem \ref{Theorem: Decomposition of the coefficients of A(X)} below). Consequently, if $A(X) = 0$ we would expect to have $A\prm(X) = 0$, and this is indeed the case; now an easy bit of algebra tells us that $A(T) = 0$ as desired. In the remainder of this note, we formalize the intuition outlined above.

\section*{Acknowledgments}

We thank Gary Walsh and John Voight for their helpful observations.

\section{Algebraic techniques}

In this section, we give a relationship between the coefficients of $A(X)$ and $A\prm(X)$ which emphasizes the high-index coefficients of $X$.

\begin{lemma}\label{Lemma: Decomposition of x_n^m}
	Fix a power series $X \in \bbC[[z]]$. For any $m$, we have
	\begin{align}
		X^m &= X^{[m]} + m X^{\langle m \rangle}, \label{decomp}\\ 
\intertext{where the power series $X^{[m]}$ and $X^{\langle m \rangle}$ are defined by}
                \xmn{[m]}n & \coloneqq \sum_{\substack{k_1 + \ldots + k_m = n \\ k_1, \ldots, k_m \leq \frac {n}2}} \xn {k_1} \ldots \xn {k_m} \nonumber \\
\intertext{and}
    \xmn{\langle m \rangle} n
        &\coloneqq \sum_{\ell < \frac{n}{2}} \xmn {m-1}{\ell} \xn {n - \ell}. \nonumber
	\end{align}
\end{lemma}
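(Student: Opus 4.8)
The plan is to expand $X^m = (X \cdot X \cdots X)$ directly in terms of its coefficients and then sort the resulting sum according to how the indices $k_1, \ldots, k_m$ compare to $n/2$. First I would write
\[
\xmn mn = \sum_{k_1 + \ldots + k_m = n} \xn{k_1} \cdots \xn{k_m},
\]
and observe that since the $k_i$ sum to $n$, at most one of them can strictly exceed $n/2$. This dichotomy partitions the index set of the sum into two pieces: the terms in which \emph{every} $k_i \leq n/2$, which is exactly $\xmn{[m]}n$ by definition, and the terms in which exactly one $k_i > n/2$. The main content of the proof is to show the second piece equals $m \xmn{\langle m \rangle}n$.

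For the second piece, I would argue by symmetry: group the tuples $(k_1, \ldots, k_m)$ according to which coordinate is the large one. Since the monomial $\xn{k_1} \cdots \xn{k_m}$ is symmetric in its arguments, the $m$ groups contribute equally, so the second piece is $m$ times the sum over tuples with $k_m > n/2$ (say). Writing $\ell \coloneqq k_1 + \ldots + k_{m-1} = n - k_m$, the condition $k_m > n/2$ becomes $\ell < n/2$, and for each such $\ell$ the inner sum over $k_1 + \ldots + k_{m-1} = \ell$ of $\xn{k_1} \cdots \xn{k_{m-1}}$ is precisely $\xmn{m-1}{\ell}$. This yields $\sum_{\ell < n/2} \xmn{m-1}{\ell} \xn{n-\ell} = \xmn{\langle m \rangle}n$, and combining with the first piece gives \eqref{decomp}.

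The one point requiring a little care — and the only place a genuine subtlety can hide — is the boundary case where $n$ is even and $k_i = n/2$ for two indices (so neither strictly exceeds $n/2$, but their sum already accounts for all of $n$, forcing $m = 2$ and the remaining $k_i = 0$). I would check that such tuples are correctly and unambiguously placed in the first piece $\xmn{[m]}n$ under the stated inequalities $k_i \leq n/2$, and that they are \emph{not} double-counted in the second piece, where the defining condition is the strict inequality $k_m > n/2$; the strictness is exactly what makes the partition disjoint. Once that is verified the identity is a matter of reindexing, so I expect the entire argument to be short; the ``main obstacle,'' such as it is, is simply bookkeeping the strict-versus-nonstrict inequalities consistently so that each monomial is counted with the right multiplicity.
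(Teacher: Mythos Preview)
Your proposal is correct and follows essentially the same route as the paper's own proof: expand $(X^m)_n$ as the sum over $m$-tuples summing to $n$, split off the ``core'' where every $k_i \le n/2$, and use symmetry on the remaining tuples (those with a unique index exceeding $n/2$) to collapse them to $m$ times a sum indexed by $\ell = n - k_{\text{large}} < n/2$, recognizing the inner sum as $(X^{m-1})_\ell$. Your extra remark about the strict-versus-nonstrict boundary is sound but not something the paper dwells on; otherwise the arguments are the same up to the choice of which coordinate is singled out.
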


\begin{proof}
       
	By definition, we have
\begin{align*}
	\xmn m n &= \sum_{k_1 + \ldots + k_m = n} \xn {k_1} \ldots \xn {k_m} \\
                 &= \sum_{\substack{k_1 + \ldots + k_m = n \\ k_1, \ldots, k_m \leq \frac {n}2}} \xn {k_1} \ldots \xn {k_m} + \sum_{j=1}^m \sum_{\substack{k_1 + \ldots + k_m = n \\ k_j > \frac {n}2}} \xn {k_1} \ldots \xn {k_m}.
\intertext{If $j = 1$ and $\ell \coloneqq n-k_1$, then $\ell < n/2$. By symmetry, the last sum is independent of $j$, so} 
        \xmn m n &= \xmn{[m]}n + m \sum_{\ell<n/2} \xn{n-\ell} \sum_{k_2 + \ldots + k_{m} = \ell} \xn {k_2} \ldots \xn {k_m}\\
                 &= \xmn{[m]}n + m \sum_{\ell<n/2} \xn{n-\ell} (X^{m-1})_\ell\;,
\end{align*}
from which \eqref{decomp} follows.
\end{proof}

Informally, Lemma \ref{Lemma: Decomposition of x_n^m} partitions the summands of $\xmn m n$ into a central ``core'' in which every index is less than or equal to $n/2$, and $m$ sets in which each summand has a (necessarily single) factor $\xn k z^k$ with $k>n/2$. We illustrate this with the summands of $\xmn 3 4$, the coefficient of $z^4$ in $X^3 = (\xn 0 + \xn 1 z + \xn 2 z^2 + \dots)^3$, arranged as
\begin{equation*}
\medmath{
\begin{matrix}
&&&&\mathbf{\xn 0 \xn 0 \xn 4}&&&&\\\
&&&\mathbf{\xn 0\xn 1\xn 3}&&\mathbf{\xn 1\xn 0\xn 3}&&&\\
&&\xn 0\xn 2\xn 2&&\xn 1\xn 1\xn 2&&\xn 2\xn 0\xn 2&&\\
&\mathbf{\xn 0\xn 3\xn 1}&&\xn 1\xn 2\xn 1&&\xn 2\xn 1\xn 1&&\mathbf{\xn 3\xn 0\xn 1}&\\
\mathbf{\xn 0\xn 4\xn 0}&&\mathbf{\xn 1\xn 3\xn 0}&&\xn 2\xn 2\xn 0&&\mathbf{\xn 3\xn 1\xn 0}&&\mathbf{\xn 4\xn 0\xn 0}.
\end{matrix}}
\end{equation*}
The terms in any of the three boldfaced triangles sum to $(X^{\langle 3 \rangle})_4$, while $(X^{[3]})_4$ is the sum over the central inverted triangle. 

These definitions extend to full power series. For instance, we have 
\[
X^{[3]}(z) = \xn 0 + 3\xn 0^2\xn 1 z + 3\xn 0\xn 1^2 z^2 + \xn 1^3 z^3 + (3\xn 0\xn 2^2+3\xn 1^2\xn 2)z^4 + \cdots
\]
and  
\[
X^{\langle 3 \rangle}(z) = \xn 0^2\xn 2 z^2 + (\xn 0^2\xn 3+2\xn 0\xn 1\xn 2) z^3 + (\xn 0^2\xn 4+2\xn 0\xn 1\xn 3)z^4 + \cdots.
\]
But observe that for $m = 0, 1, 2$ and $n$ arbitrary, the set
\[
\set{(k_1, \ldots, k_m) \in \bbZ^m \ : \ k_1 + \ldots + k_m = n \ \text{and} \ 0 \leq k_1, \ldots, k_m \leq n/2}
\]
may be empty or singleton, so there are a few irregularities for low powers: we have $X^{[0]} = 1$,  $X^{[1]} = \xn 0$, and  $X^{[2]} = 1 + \xn 1^2 z^2 + \xn 2^2 z^4+ \cdots$.

Let
\begin{equation*}
	\delta (A,X) \coloneqq \sum_{j \leq m} A_j X^{[j]},
\end{equation*}
and write $\delta_n(A, X) \coloneqq \delta(A, X)_n$. We may think of $\delta(A, X)$ as a ``pseudopolynomial'' version of $A(X)$ using only the cores of the powers $X^m$. Let also 
\begin{equation*}
	\eerrorn n(A,X) \coloneqq \sum_{j\leq m} \sum_{\substack{k+p+q=n \\ q<p\leq\frac n2}} j (A_j)_k \xmn {j-1} q \xn p
\end{equation*}
and, for $\lambda < n/2$, let
\begin{equation*}
	\gamma_{n,\lambda}(A,X) \coloneqq \sum_{\lambda \leq \ell < \frac{n}{2}} \apxn \ell \xn {n - \ell}.
\end{equation*}
While innocuous on its face, Lemma \ref{Lemma: Decomposition of x_n^m} is instrumental in the proof of the following algebraic identity. 

\begin{theorem}\label{Theorem: Decomposition of the coefficients of A(X)}
	Fix a power series $X \in \bbC[[z]]$ and a polynomial $A(T) \in \bbC[[z]][T]$. For any $n$ and any $\lambda< n/2$, we have
\begin{equation}
	\axn n = \sum_{\ell < \lambda} \apxn \ell \xn {n - \ell} 
                   + \gamma_{n,\lambda}(A,X) 
                   + \delta_n(A,X)
                   + \eerrorn n (A,X),\label{Equation: Decomposition for alpha_n}
\end{equation}
\end{theorem}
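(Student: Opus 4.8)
The plan is to establish \eqref{Equation: Decomposition for alpha_n} as a purely formal identity: expand $\axn n$ as a convolution, run each power $X^j$ through Lemma \ref{Lemma: Decomposition of x_n^m}, and then sort the resulting summands according to the size of the ``large'' index relative to $n/2$. First I would write
\[
\axn n \;=\; \sum_{j \leq m} (A_j X^j)_n \;=\; \sum_{j \leq m} \sum_{k + q = n} \amn{j}{k}\, \xmn{j}{q},
\]
and substitute $X^j = X^{[j]} + j X^{\langle j \rangle}$. The $X^{[j]}$ part immediately contributes $\sum_{j \leq m} (A_j X^{[j]})_n = \delta_n(A,X)$ by definition, so the whole problem reduces to the $j\,X^{\langle j \rangle}$ part.

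For that part I would unfold $\xmn{\langle j \rangle}{q} = \sum_{\ell < q/2} \xmn{j-1}{\ell}\, \xn{q-\ell}$ and change variables, setting $p \coloneqq q - \ell$ so that $(k,\ell,p)$ ranges over $k + \ell + p = n$ and the condition $\ell < q/2$ becomes exactly $\ell < p$. Now split according to whether $p \leq n/2$ or $p > n/2$. The terms with $p \leq n/2$ carry the constraint $\ell < p \leq n/2$ and, after renaming $\ell$ to $q$, match the definition of $\eerrorn n(A,X)$ on the nose. For the terms with $p > n/2$, the inequality $\ell < p$ is automatic (since $k + \ell + p = n$ forces $\ell \leq n - p < n/2 < p$), so I may free the $\ell$-range and reindex by $s \coloneqq n - p < n/2$; the inner sum then collapses to $\sum_{k+\ell = s}\amn{j}{k}\, \xmn{j-1}{\ell} = (A_j X^{j-1})_s$, and summing $j\,(A_j X^{j-1})_s$ over $j \leq m$ gives $\apxn s$. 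Hence the $p > n/2$ contribution equals $\sum_{s < n/2}\apxn s\, \xn{n-s}$, which I split at $s = \lambda$ --- legitimate because $\lambda < n/2$ --- into $\sum_{\ell < \lambda}\apxn \ell\, \xn{n-\ell}$ and $\gamma_{n,\lambda}(A,X)$. Collecting the three pieces yields \eqref{Equation: Decomposition for alpha_n}.

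I expect the only real care to lie in the index bookkeeping: verifying that $\ell < q/2 \Leftrightarrow \ell < p$ under $q = \ell + p$, and confirming that when $p > n/2$ the constraint on $\ell$ genuinely disappears so that the inner convolution closes up to $(A_j X^{j-1})_s$. The low-degree degeneracies (say $j = 0$, where the factor $j$ annihilates the $X^{\langle 0 \rangle}$ term, and $j = 1$, where $X^{j-1} = 1$) cause no trouble, because every term touched by the reindexing already carries a factor of $j$. There are no analytic estimates here; once the substitutions are in place, the identity is just a matter of matching terms against the definitions of $\delta_n$, $\eerrorn n$, and $\gamma_{n,\lambda}$.
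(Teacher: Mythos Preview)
Your proposal is correct and follows essentially the same route as the paper: expand $\axn n$ as a convolution, apply Lemma~\ref{Lemma: Decomposition of x_n^m} to peel off $\delta_n(A,X)$, reindex the $X^{\langle j\rangle}$ contribution via $k+\ell+p=n$ with $\ell<p$, and then split on $p\le n/2$ versus $p>n/2$ to obtain $\eerrorn n(A,X)$ and the $A'(X)$ sum, the latter being cut at $\lambda$. The only differences are cosmetic variable names; your verification that $\ell<p$ is automatic when $p>n/2$ is exactly the observation the paper uses to collapse the inner sum to $(A_jX^{j-1})_{n-p}$.
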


\begin{proof}
	Write $A(T) = \sum_{j \leq m} A_j T^j$, where $m$ is the degree of $A$. By definition,
	\begin{equation}\label{polydef}
		\axn n = \sum_{j \leq m} (A_jX^j)_n  = \sum_{j \leq m} \sum_{k \leq n} \amn jk \xmn j{n-k}.
	\end{equation}
	Substituting \eqref{decomp} into \eqref{polydef}, and recalling the definition of $X^{[j]}$, we obtain
	\begin{align*}
		\axn n &= \sum_{j \leq m} \sum_{k \leq n} \amn jk \parent{(X^{[j]})_{n-k} + j (X^{\langle j \rangle})_{n-k}}\\
		&= \sum_{j \leq m} \parent{(A_j X^{[j]})_n + j \sum_{k \leq n} \amn jk \sum_{q < \frac{n-k}{2}} \xmn {j-1}q \xn {n - k - q}}\\
		&= \delta_n(A,X) + \sum_{j \leq m} j \sum_{k \leq n} \amn jk \sum_{q < \frac{n-k}{2}} \xmn {j-1}q \xn {n - k - q}\\
	\end{align*}
	Let $p \coloneqq n-k-q$, so $q < (n-k)/2$ if and only if $q < p$. Thus
	\[
 		\axn n - \delta_n(A,X) = \sum_{\substack{k+p+q=n \\ q<p}} \parent{\sum_{j\leq m}j A_j}_k \xmn {j-1}q \xn p
	\]
	We break this sum into two cases, depending on whether  $p \leq n/2$ or $p > n/2$.

	In the first case, we get
	\[
		\sum_{j\leq m} \sum_{\substack{k+p+q=n \\ q<p\leq\frac n2}} j (A_j)_k \xmn {j-1}\ell \xn p = \eerrorn n(A,X).
	\]

	In the second case, $k$ and $q$ have no restriction other than that $k+p+q=n$, and
	\begin{align*}
		\sum_{\substack{k+p+q=n \\ \frac n2 <p\leq n}}\parent{\sum_{j\leq m}j A_j}_k \xmn {j-1}q \xn p &= \sum_{\frac n2 < p \leq n}\parent{\sum_{j\leq m} j A_j X^{j-1}}_{n-p}X_p \notag\\
		&= \sum_{\frac n2 < p \leq n} A'(X)_{n-p} X_p.
	\end{align*}
	Setting $\ell \coloneqq n-p$, \eqref{Equation: Decomposition for alpha_n} follows immediately.
\end{proof}

Figure \ref{barycentric}  may make what we have just done clearer. The coefficient $\axn n$ is the sum of a number of terms of the form $(A_j)_k X_{\ell_1}X_{\ell_2}\cdots X_{\ell_j}$, where $k+\ell_1+\cdots+\ell_j = n$.  If none of the indices $\ell_i$ exceeds $(n-k)/2$, the summand is part of the core. Otherwise, let $p$ be some maximal $\ell_i$, and let $q$ the sum of the others. For fixed $n$, the triplets $(k,p,q)$ may be considered as barycentric coordinates.

\begin{figure}
   \centering
    \includegraphics[width= 8cm]{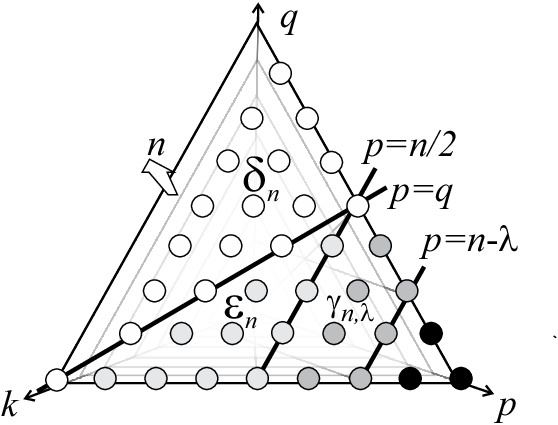}
      \caption{Components of $A(X)_n$ in barycentric coordinates}
    \label{barycentric}
\end{figure}

Triplets with $p\leq q$ (shown in white) are the summands of the core, $\delta_n(A,X)$. (Due to the definition of $p$, we cannot have $p=0$ and $q>0$ simultaneously; these positions on the graph are left empty.) Coordinates with $q < p \leq n/2$ (shown in light grey) are summands of $\eerrorn n (A,X)$. Finally, coordinates with $p>n/2$ correspond to the region in which the coefficient of $X_p$ in $A(X)_n$ is $A'(X)_{n-p}$. We subdivide this into the summands of $\gamma_{n,\lambda}(X,A)$ (dark grey) and the set of triplets (shown in black) for which $l < \lambda$. Unlike the others, this last set of triplets does not become more numerous with increasing $n$. We shall see that if the coefficients of $X$ grow fast enough, each of $\gamma_{n,\lambda}(X,A)$, $\delta_n(X,A)$, and $\epsilon_n(X,A)$ is insignificant compared to the terms with $l < \lambda$.

\section{Criteria and constructions for transcendental power series}\label{Section: Criteria and constructions for transcendental power series}

In this section, we prove our main theorem and furnish some related results.

\begin{lemma}\label{Lemma: If all three error terms are small, the derivative vanishes at X}
	Fix a power series $X \in \bbC[[z]]$ and a polynomial $A(T) \in \bbC[[z]][T]$, and suppose that $A(X) = 0$. Suppose moreover that for each $\lambda$, we have, as $n \rightarrow \infty$:
	\begin{align}
		\abs{\gamma_{n,\lambda}(A,X)} &= o(\abs{\xn {n-\lambda}}); \label{Condition: gamma_n is small} \\
		\abs{\derrorn n (A,X)} &= o(\abs{\xn {n-\lambda}});\label{Condition: delta_n is small} \\
		\abs{\eerrorn n (A,X)} &= o(\abs{\xn {n-\lambda}}).\label{Condition: epsilon_n is small}
	\end{align}
	Then $A\prm(X) = 0$.
\end{lemma}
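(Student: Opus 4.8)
The plan is to use the identity in Theorem~\ref{Theorem: Decomposition of the coefficients of A(X)} to strip off the coefficients of $A\prm(X)$ one index at a time. Since $A(X) = 0$ we have $\axn n = 0$ for every $n$, and since $A\prm(X) = \sum_{\ell \geq 0} \apxn \ell z^\ell$ it suffices to prove, by induction on $\ell \geq 0$, that $\apxn \ell = 0$.

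For the inductive step, fix $\ell$, assume $\apxn j = 0$ for all $j < \ell$, and take $n > 2(\ell+1)$. Applying Theorem~\ref{Theorem: Decomposition of the coefficients of A(X)} with $\lambda = \ell$, using $\axn n = 0$, and invoking the inductive hypothesis to annihilate $\sum_{j < \ell} \apxn j \xn{n-j}$, I get
\[
	\gamma_{n,\ell}(A,X) = -\derrorn n(A,X) - \eerrorn n(A,X),
\]
so by \eqref{Condition: delta_n is small} and \eqref{Condition: epsilon_n is small} the right-hand side — hence $\gamma_{n,\ell}(A,X)$ — is $o(\abs{\xn{n-\mu}})$ for every fixed $\mu$. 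On the other hand, the definition of $\gamma$ gives $\gamma_{n,\ell}(A,X) = \apxn \ell \xn{n-\ell} + \gamma_{n,\ell+1}(A,X)$, while \eqref{Condition: gamma_n is small} with $\lambda = \ell+1$ bounds $\gamma_{n,\ell+1}(A,X) = o(\abs{\xn{n-\ell-1}})$. Subtracting,
\[
	\apxn \ell \xn{n-\ell} = \gamma_{n,\ell}(A,X) - \gamma_{n,\ell+1}(A,X) = o(\abs{\xn{n-\ell-1}}) \quad \text{as } n \to \infty.
\]

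The last step is to conclude $\apxn \ell = 0$. Since $\apxn \ell$ is a fixed scalar in $K$, were it nonzero we could divide through by $\abs{\apxn \ell} > 0$ and obtain $\abs{\xn{n-\ell}} = o(\abs{\xn{n-\ell-1}})$; that is, the coefficients of $X$ would eventually decay (super-geometrically) toward $0$. I expect this to be the main obstacle: ruling it out requires the ambient hypotheses to force $\abs{\xn{n-\ell}}$ not to become negligible next to $\abs{\xn{n-\ell-1}}$, and to force $\xn{n-\ell} \neq 0$ for $n$ large so that the division is legitimate. In the intended applications this is automatic — there $\abs{\xn n}$ is unbounded and in fact eventually increasing, so no high-index coefficient can be killed off by such an estimate. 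Granting this point, $\apxn \ell = 0$, the induction closes, and $A\prm(X) = 0$ follows.
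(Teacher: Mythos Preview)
Your approach matches the paper's: take the minimal $\lambda$ with $\apxn\lambda \neq 0$ (equivalently, induct on $\ell$), apply the decomposition of Theorem~\ref{Theorem: Decomposition of the coefficients of A(X)}, and bound the three error pieces. The subtlety you isolate at the end---that one only obtains $\apxn\ell\,\xn{n-\ell} = o(\abs{\xn{n-\ell-1}})$ and must still rule out $\abs{\xn{n-\ell}} = o(\abs{\xn{n-\ell-1}})$---is real, and the paper's own proof elides the same point. Indeed, the paper's displayed identity
\[
\axn n = \apxn\lambda \xn{n-\lambda} + \gamma_{n,\lambda}(A,X) + \delta_n(A,X) + \eerrorn n(A,X)
\]
is off by one index (the summand $\apxn\lambda \xn{n-\lambda}$ already lies inside $\gamma_{n,\lambda}$ by definition); once one corrects this to $\gamma_{n,\lambda+1}$, condition~\eqref{Condition: gamma_n is small} gives only $o(\abs{\xn{n-\lambda-1}})$, exactly the mismatch you flag. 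Your diagnosis of the resolution is also correct: in every use of the lemma (see the proof of Theorem~\ref{Theorem: Transcendence for fast-growing series}) the standing hypotheses force $\abs{X_n}$ to be eventually strictly increasing, which makes $\abs{\xn{n-\ell}} = o(\abs{\xn{n-\ell-1}})$ impossible and closes the argument. As a stand-alone lemma, the statement seems to need this extra growth input, and you were right to call it out.
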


\begin{proof}
	Let $n \geq 0$ be arbitrary. By Theorem \ref{Theorem: Decomposition of the coefficients of A(X)} we have
	\[
	\axn n = \sum_{\ell < \lambda} \apxn \ell \xn {n - \ell} 
                   + \gamma_{n,\lambda}(A,X) 
                   + \delta_n(A,X)
                   + \eerrorn n (A,X),
	\]
	We claim $\apxn \ell = 0$ for each $\ell$. If not, then let $\lambda$ be minimal such that $\apxn \lambda \neq 0$. Then
	\[
	\axn n = \apxn \lambda \xn {n - \lambda} 
                   + \gamma_{n,\lambda}(A,X) 
                   + \delta_n(A,X)
                   + \eerrorn n (A,X),
	\]
	By conditions \ref{Condition: gamma_n is small}, \ref{Condition: delta_n is small}, and \ref{Condition: epsilon_n is small}, we observe \[
	\abs{\gamma_{n,\lambda}(A,X)},\abs{\derrorn n (A,X)}, \abs{\eerrorn n (A,X)}  < \frac{\abs{\apxn \lambda \xn {n - \lambda}}}{3}
	\]
	for $n$ sufficiently large. Thus
	\[
	0 = \abs{\axn n} \geq \abs{\apxn \lambda \xn {n - \lambda}} - \abs{\derrorn n (A,X)} - \abs{\eerrorn n (A,X)} - \abs{\gamma_{n,\lambda}(A,X)} > 0,
	\]
	and we have obtained a contradiction. Then $\apxn \ell = 0$ for all $\ell$, and thus $A\prm(X) = 0$ as desired.
\end{proof}

\begin{lemma}\label{Lemma: If conditions 1-3 hold for all polynomials, then X is transcendental}
	Let $R$ be a subring of $\bbC[[z]]$ and let $X \in \bbC[[z]]$. Suppose that conditions \ref{Condition: gamma_n is small} through \ref{Condition: epsilon_n is small} of Lemma \ref{Lemma: If all three error terms are small, the derivative vanishes at X} hold for all series polynomials $A(T) \in R[T]$ such that $A(X) = 0$. Then $X$ is transcendental over $\bbC$.
\end{lemma}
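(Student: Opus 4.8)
The plan is to argue by contradiction using Lemma \ref{Lemma: If all three error terms are small, the derivative vanishes at X} together with a minimal-degree/descent argument. So suppose $X$ is algebraic over $R$. Then the collection of nonzero polynomials $A(t) = \sum_{j \leq m} A_j t^j \in R[t]$ (with $A_m \neq 0$) satisfying $A(X) = 0$ is nonempty, and I would fix one, call it $A(t)$, whose degree $m$ is as small as possible.

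Since this $A(t)$ lies in $R[t]$ and satisfies $A(X) = 0$, the standing hypothesis supplies conditions \ref{Condition: gamma_n is small}, \ref{Condition: delta_n is small}, and \ref{Condition: epsilon_n is small} for $A$. Hence Lemma \ref{Lemma: If all three error terms are small, the derivative vanishes at X} applies and yields $A\prm(X) = 0$, where $A\prm(t) = \sum_{j \leq m} j A_j t^{j-1}$.

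Next I would inspect $A\prm(t)$. Each coefficient $j A_j$ is a finite sum of copies of $A_j \in R$, so $A\prm(t) \in R[t]$, and its leading coefficient is $m A_m$. If $m \geq 1$, then because $K$ has characteristic $0$ the integer $m$ is nonzero in $K$, and since $K[[z]]$ is an integral domain we get $m A_m \neq 0$; thus $A\prm(t)$ is a \emph{nonzero} polynomial of degree $m - 1 < m$ with $A\prm(X) = 0$, contradicting the minimality of $m$. The only remaining possibility is $m = 0$, in which case $A(t) = A_0$ is a nonzero constant of $R$ and $0 = A(X) = A_0 \neq 0$, again a contradiction. Either way the assumption that $X$ is algebraic over $R$ fails, so $X$ is transcendental over $R$.

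I do not expect a genuine obstacle here: all of the analytic content is already packaged inside Lemma \ref{Lemma: If all three error terms are small, the derivative vanishes at X}, and what remains is the short descent above. The two points that need care are the use of characteristic $0$ — without it the formal derivative $A\prm(t)$ could collapse to the zero polynomial, breaking the degree descent — and the degenerate base case $m = 0$, which must be handled separately as indicated.
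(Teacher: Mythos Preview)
Your argument is correct and matches the paper's proof in spirit: both reduce the claim to Lemma \ref{Lemma: If all three error terms are small, the derivative vanishes at X} and then perform a short degree descent using that $A\prm(t)\in R[t]$ and that characteristic $0$ keeps the leading coefficient alive. The only cosmetic difference is packaging: the paper iterates Lemma \ref{Lemma: If all three error terms are small, the derivative vanishes at X} to make every formal derivative of $A$ vanish at $X$ and then concludes $A$ is constant, whereas you pick a minimal-degree annihilator and derive a contradiction in one step.
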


\begin{proof}
	Suppose $A(T) \in R[T]$ is chosen with $A(X) = 0$. Repeated applications of Lemma \ref{Lemma: If all three error terms are small, the derivative vanishes at X} show that every derivative of $A(X)$ vanishes. Then $A(T)$ must be constant, and so $A(T) = A(X) = 0$. Thus $X$ is transcendental over $R$ as desired.
\end{proof}

At this point, we are ready to prove our main theorem.

\begin{theorem}\label{Theorem: Transcendence for fast-growing series}
	Fix $R$ a subring of $\bbC[[z]]$, and suppose we have a monotone increasing function $\rho : \bbN \to \bbR_{> 0}$ such that for every $C \in R$, we have $\abs{C_n} = O(\rho(n))$ as $n \to \infty$.
	
	Suppose that $\abs{\xn 0} \geq 1$, and that for every fixed $\lambda$ and $m$, the power series $X$ satisfies the following conditions as $n \rightarrow \infty$: 
	\begin{align*}
		\rho(n) \parent{\sum_{\ell \leq \frac{n}{2}} \abs{\xn {\ell}}}^m &= o(\abs{\xn {n-\lambda}}); \\
		\rho(n) \abs{\xn {n - \lambda - 1}} \parent{\sum_{\ell < \frac{n}{2}} \abs{\xn \ell}}^m &= o(\abs{\xn {n-\lambda}}).
	\end{align*}
	Then $X$ is transcendental over $R$.
\end{theorem}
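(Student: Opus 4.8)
The plan is to derive Theorem~\ref{Theorem: Transcendence for fast-growing series} from Lemma~\ref{Lemma: If conditions 1-3 hold for all polynomials, then X is transcendental}. Fix an arbitrary polynomial $A(t) = \sum_{j \le m} A_j t^j \in R[t]$ and an arbitrary constant $\lambda$; I will show that, as $n \to \infty$,
\[
\abs{\delta_n(A,X)} = o(\abs{\xn{n-\lambda}}), \qquad \abs{\eerrorn n (A,X)} = o(\abs{\xn{n-\lambda}}), \qquad \abs{\gamma_{n,\lambda}(A,X) - A'(X)_\lambda\,\xn{n-\lambda}} = o(\abs{\xn{n-\lambda}}),
\]
these being exactly what the argument of Lemma~\ref{Lemma: If all three error terms are small, the derivative vanishes at X} requires (and none of the three quantities involves the assumption $A(X) = 0$, so there is no harm in letting $A$ range over all of $R[t]$). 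First, since each $A_j \in R$, the bounds $C_n = O(\rho(n))$ together with the monotonicity of $\rho$ supply a single constant $M > 0$ with $\abs{(A_j)_k} \le M\rho(n)$ for all $j \le m$ and all $k \le n$. Second, I extract two consequences of the growth hypotheses on $X$: taking $\lambda = m = 0$ in the first displayed condition of Theorem~\ref{Theorem: Transcendence for fast-growing series} gives $\rho(n) = o(\abs{\xn n})$, hence $\abs{\xn n} \to \infty$; consequently $\sum_{\ell \le n/2}\abs{\xn{\ell}} \ge \sum_{\ell < n/2}\abs{\xn{\ell}} \ge \abs{\xn 0} \ge 1$, and since eventually every summand is $\ge 1$ we also get $n = O\bigl(\sum_{\ell < n/2}\abs{\xn{\ell}}\bigr)$; taking $\lambda = m = 0$ in the second displayed condition gives $\abs{\xn{n-1}} = o(\abs{\xn n})$, so that $\abs{\xn{k-1}} < \abs{\xn{k}}$ for all sufficiently large $k$.

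The estimates on $\delta_n(A,X) = \sum_{j \le m}(A_j X^{[j]})_n$ and on $\eerrorn n (A,X)$ are routine. Combining the triangle inequality, $\abs{(A_j)_k} \le M\rho(n)$, the fact that every index occurring in $(X^{[j]})_i$ is at most $i/2 \le n/2$, and the coefficient-sum inequality $\sum_{i \le n}\abs{(X^{[j]})_i} \le \bigl(\sum_{\ell \le n/2}\abs{\xn{\ell}}\bigr)^{j}$, and then replacing $j$ by $m$ (legitimate since $\sum_{\ell \le n/2}\abs{\xn{\ell}} \ge 1$), one obtains
\[
\abs{\delta_n(A,X)} \;\le\; (m+1)\,M\,\rho(n)\Bigl(\sum_{\ell \le n/2}\abs{\xn{\ell}}\Bigr)^{m} \;=\; o(\abs{\xn{n-\lambda}})
\]
by the first hypothesis. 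Running through the sum defining $\eerrorn n (A,X)$, over the triples $k + p + q = n$ with $q < p \le n/2$, in the same way — now using the coefficient-sum bounds $\sum_{q < n/2}\abs{(X^{j-1})_q} \le \bigl(\sum_{\ell < n/2}\abs{\xn{\ell}}\bigr)^{j-1}$ and $\sum_{p \le n/2}\abs{\xn p} \le \sum_{\ell \le n/2}\abs{\xn{\ell}}$ — gives $\abs{\eerrorn n (A,X)} \le c\,\rho(n)\bigl(\sum_{\ell \le n/2}\abs{\xn{\ell}}\bigr)^{m}$ for a constant $c$ depending only on $m$ and $M$, which is again $o(\abs{\xn{n-\lambda}})$ by the first hypothesis.

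The term $\gamma_{n,\lambda}(A,X) = \sum_{\lambda \le \ell < n/2}A'(X)_\ell\,\xn{n-\ell}$ is where the second hypothesis is needed, and I expect this to be the main obstacle. The same coefficient-sum device applied to $A'(X) = \sum_j jA_j X^{j-1}$ gives $\abs{A'(X)_\ell} \le c'\,\rho(n)\bigl(\sum_{\ell' < n/2}\abs{\xn{\ell'}}\bigr)^{m-1}$ for every $\ell < n/2$. Splitting off the $\ell = \lambda$ summand and noting that the remaining indices $n-\ell$ (for $\lambda < \ell < n/2$) form a block of at most $n/2$ consecutive integers whose largest member is $n-\lambda-1$, over which $j \mapsto \abs{\xn{j}}$ is increasing once $n$ is large, one gets $\sum_{\lambda < \ell < n/2}\abs{\xn{n-\ell}} \le \frac{n}{2}\,\abs{\xn{n-\lambda-1}}$ and hence
\[
\abs{\gamma_{n,\lambda}(A,X) - A'(X)_\lambda\,\xn{n-\lambda}} \;\le\; \frac{n}{2}\,c'\,\rho(n)\,\abs{\xn{n-\lambda-1}}\Bigl(\sum_{\ell' < n/2}\abs{\xn{\ell'}}\Bigr)^{m-1}.
\]
Absorbing the factor $n$ using $n = O\bigl(\sum_{\ell' < n/2}\abs{\xn{\ell'}}\bigr)$ bounds the right-hand side by $c''\,\rho(n)\,\abs{\xn{n-\lambda-1}}\bigl(\sum_{\ell' < n/2}\abs{\xn{\ell'}}\bigr)^{m}$, which is $o(\abs{\xn{n-\lambda}})$ by the second hypothesis.

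With the three estimates in hand for every $A(t) \in R[t]$ and every $\lambda$, the argument of Lemma~\ref{Lemma: If all three error terms are small, the derivative vanishes at X} shows that $A(X) = 0$ forces $A'(X) = 0$, and Lemma~\ref{Lemma: If conditions 1-3 hold for all polynomials, then X is transcendental} then yields $A(t) = A(X) = 0$; so $X$ is transcendental over $R$. (Characteristic $0$ enters only at this last step, to pass from the vanishing of every formal derivative of $A$ at $X$ to the conclusion that $A$ is constant.) Aside from the $\gamma$ estimate, the steps warranting care are the two ``softening'' facts extracted above — $\abs{\xn n} \to \infty$, whence $n = O(\sum_{\ell < n/2}\abs{\xn{\ell}})$, and $\abs{\xn{n-1}} = o(\abs{\xn n})$, whence the eventual monotonicity — and keeping the sums $\sum_{\ell \le n/2}$ and $\sum_{\ell < n/2}$ distinct, so that the theorem's two hypotheses can be applied with no loss.
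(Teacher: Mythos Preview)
Your proof is correct and follows essentially the same route as the paper: reduce to Lemma~\ref{Lemma: If conditions 1-3 hold for all polynomials, then X is transcendental} via Lemma~\ref{Lemma: If all three error terms are small, the derivative vanishes at X}, then bound $\delta_n$, $\epsilon_n$, and the tail of $\gamma_{n,\lambda}$ using the two growth hypotheses together with the soft consequences $|X_n|\to\infty$ and $|X_{n-1}|=o(|X_n|)$. Your bookkeeping is in fact a bit cleaner than the paper's---your coefficient-sum bounds for $\delta_n$ and $\epsilon_n$ avoid the intermediate factors of $n$ that the paper later absorbs, and your explicit subtraction of the $\ell=\lambda$ term from $\gamma_{n,\lambda}$ sidesteps an indexing slip between the paper's definition of $\gamma_{n,\lambda}$ (which sums over $\lambda\le\ell$) and its use in the proofs of Lemma~\ref{Lemma: If all three error terms are small, the derivative vanishes at X} and Theorem~\ref{Theorem: Transcendence for fast-growing series} (which effectively sum over $\lambda<\ell$).
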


\begin{proof}
	Note that the last condition of Theorem \ref{Theorem: Transcendence for fast-growing series} gives us $\abs{\xn n} = o(\abs{\xn {n+1}})$ as $n \to \infty$, and \emph{a fortiori} $\abs{X_n}$ is eventually increasing. We will prove that conditions \ref{Condition: gamma_n is small}, \ref{Condition: delta_n is small}, and \ref{Condition: epsilon_n is small} of Lemma \ref{Lemma: If all three error terms are small, the derivative vanishes at X} hold for every polynomial $A(T) \in R[T]$ with $A(X) = 0$, so Lemma \ref{Lemma: If conditions 1-3 hold for all polynomials, then X is transcendental} will give us our desired result. 

Fix a polynomial $A(T) = \sum_{j \leq m} A_j T^j \in R[T]$ for which $A(X) = 0$; and fix $\lambda \geq 0$. We compute (as $n \rightarrow \infty$): 
	\begin{align*}
		\abs{\gamma_{n,\lambda}(A,X)} &\leq \sum_{\lambda < \ell < \frac{n}{2}} \sum_{j \leq m} \sum_{k \leq \ell} \abs{j \amn jk \xmn {j - 1} {\ell - k} \xn {n - \ell} } \\
		&= O\parent{\rho(n) \sum_{\lambda < \ell < \frac{n}{2}} \sum_{k \leq \ell} \abs{\xmn {m - 1} {\ell - k} \xn {n - \ell} } } \\
		&= O\parent{n \rho(n) \abs{\xn {n - \lambda - 1} } \sum\limits_{k_1 + \dots + k_{m-1} < \frac{n}{2}} \abs{\xn {k_1} \dots \xn {k_{m-1}}}} \\
		&= O\parent{\rho(n) \abs{\xn {n - \lambda - 1} } \parent{\sum\limits_{\ell < \frac{n}{2}} \abs{\xn {\ell}}}^{m}} \\
		&= o(\abs{\xn {n - \lambda}}),
	\end{align*}	
	where the second-to-last asymptotic holds because $n = O\parent{\sum_{\ell < \frac{n}{2}} \abs{\xn \ell}}$. Thus condition \ref{Condition: gamma_n is small} holds.
	
	Next we compute
	\begin{align*}
		\abs{\derrorn n (A,X)} &\leq \sum_{j \leq m} \sum_{k \leq n} \abs{\amn jk (X^{[j]})_{n - k}} \\
		&= O\parent{\rho(n) \sum_{k \leq n} \sum_{\substack{k_1 + \ldots + k_m = n - k \\ k_1, \ldots, k_m \leq \frac {n-k}2}} \abs{\xn {k_1} \ldots \xn {k_m}}}\\
		&= O\parent{n \rho(n) \parent{\sum_{\ell \leq \frac {n}2} \abs{\xn {\ell}}}^m} \\
		&= O\parent{\rho(n) \parent{\sum_{\ell \leq \frac {n}2} \abs{\xn {\ell}}}^{m+1}} \\
		&= o(\abs{\xn {n - \lambda}}),
	\end{align*}
	where the second-to-last asymptotic holds because $n = O\parent{\sum_{\ell \leq \frac{n}{2}} \abs{\xn \ell}}$. Thus condition \ref{Condition: delta_n is small} holds.
	
	Finally, we compute
	\begin{align*}
		\abs{\eerrorn n (A,X)} &\leq \sum_{j\leq m} \sum_{\substack{k+p+q=n \\ q<p\leq\frac n2}} \abs{j (A_j)_k \xmn {j-1} q \xn p} \\
		&= O\parent{n \rho(n) \abs{\xn {\floor{\frac{n}{2}}}} \sum_{\substack{k + q < n \\ q < \frac n2}} \abs{\xmn {m-1} q}} \\
		&= O\parent{n^2 \rho(n) \abs{\xn {\floor{\frac{n}{2}}}} \parent{\sum_{q < \frac n2} \abs{\xn q}}^{m-1} } \\
		&= O\parent{\rho(n) \parent{\sum_{\ell \leq \frac n2} \abs{\xn \ell}}^{m+2}} \\
		&= o(\abs{\xn {n - \lambda}}),
	\end{align*}
	where the second-to-last asymptotic holds because $n$, $\sum_{q < \frac{n}{2}} \abs{\xn q}$, and $\abs{\xn {\floor{\frac{n}{2}}}}$ are each $O\parent{\sum_{\ell < \frac{n}{2}} \abs{\xn \ell}}$. Thus condition \ref{Condition: epsilon_n is small} holds, and $X$ is transcendental over $R$.
\end{proof}

\begin{corollary}\label{Corollary: if X satisfies nice-ish conditions it is transcendental}
	Assume the notation of Theorem \ref{Theorem: Transcendence for fast-growing series}. Suppose that $\abs{\xn 0} \geq 1$, and that for every fixed $\lambda$ and $m$, the power series $X$ satisfies the following condition as $n \to \infty$:
	\[
		\rho(n) \abs{\xn {n - \lambda - 1}} \parent{\sum_{\ell \leq \frac{n}{2}} \abs{\xn {\ell}}}^m = o(\abs{\xn {n-\lambda}}).
	\]
	Then $X$ is transcendental over $R$.
\end{corollary}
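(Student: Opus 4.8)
The plan is to deduce the two asymptotic hypotheses of Theorem \ref{Theorem: Transcendence for fast-growing series} from the single hypothesis assumed here, and then to apply that theorem directly. Since we are working ``under the notation of Theorem \ref{Theorem: Transcendence for fast-growing series}'', all of its standing assumptions are already in force ($K$ of characteristic $0$ with an absolute value, $R$ a subring of $K[[z]]$ with $C_n = O(\rho(n))$ for every $C \in R$, $\rho$ monotone increasing and positive, and $\abs{\xn 0} \geq 1$), so nothing beyond the two asymptotics will need to be checked.

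First I would record what the hypothesis says when $\lambda = m = 0$: it reads $\rho(n) \abs{\xn {n-1}} = o(\abs{\xn n})$, and since $\rho(n) \geq \rho(0) > 0$ this yields $\abs{\xn {n-1}} = o(\abs{\xn n})$. As in the opening lines of the proof of Theorem \ref{Theorem: Transcendence for fast-growing series}, this makes $\abs{\xn n}$ eventually strictly increasing; a short additional argument (an eventually increasing sequence bounded away from $\infty$ would have $\abs{\xn {n-1}}/\abs{\xn n}$ bounded away from $0$) upgrades this to $\abs{\xn n} \to \infty$, so that there is an $N$ with $\abs{\xn n} \geq 1$ for all $n \geq N$. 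This is the one spot I would handle carefully, and it is the pivot of the whole reduction: it lets a factor $\abs{\xn {n-\lambda-1}}$ be inserted or deleted for free once $n$ is large.

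Next I would verify the first condition of Theorem \ref{Theorem: Transcendence for fast-growing series}. Fix $\lambda$ and $m$. For $n$ large enough that $n-\lambda-1 \geq N$ we have $\abs{\xn {n-\lambda-1}} \geq 1$, hence
\[
\rho(n) \parent{\sum_{\ell \leq \frac n2} \abs{\xn {\ell}}}^m \;\leq\; \rho(n)\, \abs{\xn {n-\lambda-1}} \parent{\sum_{\ell \leq \frac n2} \abs{\xn {\ell}}}^m \;=\; o(\abs{\xn {n-\lambda}}),
\]
the equality being exactly the hypothesis of the corollary at this $\lambda$ and $m$. For the second condition I would simply use that all the summands are nonnegative, so that $\sum_{\ell < \frac n2} \abs{\xn {\ell}} \leq \sum_{\ell \leq \frac n2} \abs{\xn {\ell}}$, giving
\[
\rho(n)\, \abs{\xn {n-\lambda-1}} \parent{\sum_{\ell < \frac n2} \abs{\xn {\ell}}}^m \;\leq\; \rho(n)\, \abs{\xn {n-\lambda-1}} \parent{\sum_{\ell \leq \frac n2} \abs{\xn {\ell}}}^m \;=\; o(\abs{\xn {n-\lambda}}),
\]
again straight from the hypothesis.

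Both conditions of Theorem \ref{Theorem: Transcendence for fast-growing series} therefore hold for every fixed $\lambda$ and $m$, so that theorem applies and $X$ is transcendental over $R$. I anticipate no real obstacle here: the entire content is the observation in the second paragraph that the $(\lambda,m)=(0,0)$ instance of the hypothesis forces $\abs{\xn n} \geq 1$ for $n$ large, after which the two estimates demanded by Theorem \ref{Theorem: Transcendence for fast-growing series} follow immediately from the single estimate assumed in the corollary.
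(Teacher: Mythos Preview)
Your argument is correct and is exactly the derivation the paper has in mind: the corollary is stated there without proof, as an immediate consequence of Theorem \ref{Theorem: Transcendence for fast-growing series}, and your reduction---using the $(\lambda,m)=(0,0)$ instance to force $\abs{X_n}\to\infty$, then dropping or bounding the extra factor $\abs{X_{n-\lambda-1}}$ to recover the two hypotheses of the theorem---is precisely the intended one-line justification spelled out in full.
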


\begin{ex}\label{Example: transcendental over series exhibiting exponential growth}
	Let $R \subseteq \bbC[[z]]$ comprise the power series which exhibit exponential growth. Then we may take $\rho(n) = n!$, since $n!$ exhibits superexponential growth. Let $X = \sum_{n \geq 0} 2^{n!} z^n$, so $\xn n = 2^{n!}$. Then $\abs{\xn 0} = 2 > 1$. Moreover, for every fixed $\lambda$ and $m$, we have (as $n \rightarrow \infty$):
	\[
		n! \parent{\sum_{\ell \leq \frac{n}{2}} 2^{\ell!}}^m \leq n! \parent{\sum_{\ell \leq \floor{\frac{n}{2}}!} 2^{\ell}}^m = O\parent{2^{\floor{\frac{n+4}{2}}!}} = o\parent{2^{\parent{n - \lambda}!}},
	\]
so $X$ is transcendental over $R$ by Corollary \ref{Corollary: if X satisfies nice-ish conditions it is transcendental}. The series $X$ is a fortiori transcendental over the ring of Abel-summable series, the ring generated by the convergent series, and the ring of absolutely convergent series, since these are all subrings of $R$.
\end{ex}

\end{document}